\newrobustcmd*{\parentexttrack}[1]{%
  \begingroup
  \blx@blxinit
  \blx@setsfcodes
  \blx@bibopenparen#1\blx@bibcloseparen
  \endgroup}
\renewcommand{\cite}{\parencite}
\theoremstyle{plain}                                                           
\newtheorem{thm}{Theorem}[section]
\newtheorem{prop}[thm]{Proposition}
\theoremstyle{definition}
\newtheorem{ex}[thm]{Example}
\newtheorem{rem}[thm]{Remark}
\DeclareMathOperator{\SL}{SL}
\DeclareMathOperator{\GL}{GL}
\DeclareMathOperator{\Spec}{Spec}
\DeclareMathOperator{\Sym}{Sym}                  
\DeclareMathOperator{\Sp}{Sp}
\DeclareMathOperator{\Tr}{Tr}
\DeclareMathOperator{\sgn}{sgn}
\newcommand{\gr}{\mathfrak{gr}}
\newcommand{\field}[1]{\ensuremath{\mathbf{#1}}}
\newcommand{\R}{\ensuremath{\field{R}}}        
\newcommand{\Q}{\ensuremath{\field{Q}}}        
\newcommand{\C}{\ensuremath{\field{C}}}
\newcommand{\Z}{\ensuremath{\field{Z}}}
\newcommand{\A}{\mathcal{A}}
\renewcommand{\AA}{\field{A}}
\newcommand{\RR}{\mathrm{Rep}}
\newcommand{\gen}{\mathrm{gen}}
\renewcommand{\H}{\mathscr H}
\newcommand{\V}{\mathbb{V}}
\renewcommand{\S}{\mathsf{S}}
\newcommand{\GSp}{\mathrm{GSp}}
\newcommand{\Eis}{\mathrm{Eis}}
\newcommand{\PGSp}{\mathrm{PGSp}}
\newcommand{\PGL}{\mathrm{PGL}}
\newcommand{\Gal}{\mathrm{Gal}}
\newcommand{\g}{\mathfrak{g}}
\newcommand{\cusp}{\mathrm{cusp}}
\newcommand{\IH}{H}
\newcommand{\f}{\mathrm{fin}}
\newcommand{\X}{\mathscr X}
\newcommand{\VV}{\mathscr V} 
\newcommand{\bA}{\widetilde{\A}}
\title[Local systems on the moduli of principally polarized abelian surfaces]{Cohomology of local systems on the moduli of principally polarized abelian surfaces}
\keywords{abelian surfaces, zeta functions, Galois representations, cohomology of Shimura varieties}
\subjclass[2010]{11F46, 14K10, 11G18, 11F67, 11F75}
\author{Dan Petersen}
\thanks{This work was carried out at KTH Royal Institute of Technology, supported by the G\"oran Gustafsson foundation, and in the group of Pandharipande at ETH Z\"urich, supported by grant ERC-2012-AdG-320368-MCSK}
\email{danpete@math.kth.se}
\address{Departement Mathematik \\ ETH Z\"urich \\  R\"amistrasse 101 \\ 8092 Z\"urich\\ Switzerland} 
\address{ Institutionen f\"or Matematik \\ KTH Royal Institute of Technology \\ 100 44 Stockholm \\ Sweden }
\begin{document} 
 \maketitle   
 
 \begin{abstract}
 Let $\A_2$ be the moduli stack of principally polarized abelian surfaces. Let $\V$ be a smooth $\ell$-adic sheaf on $\A_2$ associated to an irreducible rational finite dimensional representation of $\Sp(4)$. We give an explicit expression for the cohomology of $\V$ in any degree in terms of Tate type classes and Galois representations attached to elliptic and Siegel cusp forms. This confirms a conjecture of Faber and van der Geer. As an application we prove a dimension formula for vector-valued Siegel cusp forms for $\Sp(4,\Z)$ of weight three, which had been conjectured by Ibukiyama.

 \end{abstract}
 
\section{Introduction}

Let $Y=\Gamma \backslash \mathfrak{H}$ be a modular curve, given by the quotient of the upper half plane by a congruence subgroup $\Gamma \subset \SL(2,\Z)$. An irreducible rational representation $\V$ of $\SL(2)$ defines 
a local system on $Y$, since $\V$ is in particular a representation of $\pi_1(Y) \cong \Gamma \subset \SL(2)$. After work of Eichler, Shimura, Ihara, Deligne, and many others after them, we understand extremely well the cohomology groups $H^\bullet(Y,\V)$. The cohomology classes can be described group-theoretically in terms of modular forms for the group $\Gamma$, and it has a (split) mixed Hodge structure in which the pure part corresponds to cusp forms and its complement to Eisenstein series. We can think of $\V$ also as a smooth $\ell$-adic sheaf (and $Y$ as defined over a number field, or a deeper arithmetic base), in which case the \'etale cohomology $H^\bullet(Y,\V)$ can be expressed in terms of Galois representations attached to the same modular forms \cite{deligne69}. 

There is a vast theory describing the generalization of the above to moduli spaces of higher-dimensional abelian varieties with some extra structure (polarization, endomorphism, and level), and to more general Shimura varieties. But there is not a single example where our understanding is as complete as in genus one. 

In this article we consider one of the simplest higher-genus examples and give a quite explicit description of the cohomology in this case. Namely,  consider the moduli space $\A_2$ of principally polarized abelian surfaces, and let $\V$ be a smooth $\ell$-adic sheaf associated to an irreducible representation of $\Sp(4)$. The main theorem of this article is an explicit expression for the (semi-simplification of the) $\ell$-adic Galois representation $H^k_c(\A_2,\V)$
for any $k$ and any $\V$ in terms of Tate type classes and Galois representations attached to level $1$ elliptic/Siegel cusp forms. 

These cohomology groups are natural objects of study for algebraic geometers, in particular because of applications to moduli of curves. In particular, the results of this paper are used in \cite{m28ct} to prove that the Gorenstein conjecture fails for the tautological rings of the spaces $\mathcal M_{2,n}^{\mathsf{ct}}$ for $n \geq 8$. There is some history of algebraic geometers studying the cohomology  of $\V_{a,b}$ for small values of $a+b$ by ad hoc methods for such applications, see e.g.\ \cite[Section 8]{getzlergenustwo}, \cite{bergstrom09}, \cite[Section 3]{petersentommasi}. Let us also mention \cite{fvdg1} who used point counts over finite fields to conjecture an expression for the virtual $\ell$-adic Galois representation $$ \sum_k (-1)^k [H^k_c(\A_2,\V)] \in K_0(\mathsf{Gal})$$
for any $\V_{a,b}$; see also \cite[Section 6]{bfg11} for a more detailed description. The results in this paper confirm Faber and van der Geer's conjecture. When $\V$ has regular highest weight, their conjecture was proven in \cite{weissauer} (and later independently in \cite{tehraniendoscopy}).

Using the BGG-complex of Faltings, one can relate the results of this paper to the coherent cohomology of the bundles of Siegel modular forms for $\Sp(4,\Z)$, as we explain at the end of Section \ref{mainthm}. A direct consequence of our main theorem is a proof of a dimension formula for vector-valued Siegel modular forms for $\Sp(4,\Z)$ of weight $3$, which had been conjectured in \cite{ibukiyama1}. This result has been independently obtained in \cite{taibi} using Arthur's trace formula.

The strategy of our proof is as follows. Up to semi-simplification, the cohomology is the direct sum of the \emph{Eisenstein cohomology} and the \emph{inner cohomology}. The Eisenstein cohomology on $\A_2$ of an arbitrary local system was determined in \cite{harder}, so we need only to find the inner cohomology. Now we use that the inner cohomology contains the cuspidal cohomology and is contained in the intersection cohomology, and both of these can be understood in terms of data attached to discrete spectrum automorphic representations for $\GSp(4)$. There is a very large body of work dealing with automorphic representations on $\GSp(4)$ (due to Piatetski-Shapiro, Soudry, Arthur, Weissauer, Taylor, Hales, Waldspurger and many others) since it is one of the first test cases for the general Langlands program. Since we will only work in level $1$, we can work with $\PGSp(4)$, in which case all necessary information on the discrete spectrum automorphic representations is worked out and described very explicitly in  \cite{flicker}. These results allow us to determine both the cuspidal and the intersection cohomology of these local systems, and to deduce after comparing with Harder's results that the inner cohomology coincides with the cuspidal cohomology in these cases. 

In Section 2 of this article I state the main theorem and explain the applications to vector-valued Siegel cusp forms. Section 3 contains a brief review of automorphic representations and the cohomology of Shimura varieties. I hope that this will help make the arguments accessible for algebraic geometers without this background. Section 4 specializes to $\PGSp(4)$ and contains the proof of the main theorem.

I am grateful to Jonas Bergstr\"om for many useful discussions on these topics and for his interest in this work, and Tomoyoshi Ibukiyama for several helpful pointers to the literature. 

\section{Statement of results}\label{mainresults}

Let $\A_2$ denote the moduli stack of principally polarized abelian surfaces. Let $f \colon \X \to \A_2$ be the universal family. We have a local system (smooth $\ell$-adic sheaf) $\V = \mathrm R^1 f_\ast \Q_\ell$ on $\A_2$ of rank $4$ and weight $1$, and there is a symplectic pairing 
$$ \wedge^2 \V \to \Q_\ell(-1).$$
Here $\Q_\ell(-1)$ denotes the Tate twist of the constant local system on $\A_2$. Recall Weyl's construction of the irreducible representations of $\Sp(4)$ \cite[Section 17.3]{fh91}: if $V$ is the standard $4$-dimensional symplectic vector space, then the irreducible representation with highest weight $a \geq b \geq 0$ is a constituent of $V^{\otimes (a+b)}$, where it is `cut out' by Schur functors and by contracting with the symplectic form. For instance, the representation of highest weight $(2,0)$ is $\Sym^2(V)$, and the representation $(1,1)$ is the complement of the class of the symplectic form inside $\wedge^2 V$. Weyl's construction works equally well in families, and so for each $a \geq b \geq 0$ we obtain a local system $\V_{a,b}$ which is a summand in $\V^{\otimes (a+b)}$. In this paper we determine the cohomology of $\V_{a,b}$ considered as an $\ell$-adic Galois representation up to semi-simplification. 

Note that every point of $\A_2$ has the automorphism $(-1)$, given by inversion on the abelian variety. This automorphism acts as multiplication by $(-1)^{a+b}$ on the fibers of $\V_{a,b}$. This shows that the local system has no cohomology when $a+b$ is odd.  Hence we restrict our attention to the case when $a+b$ is even. 

Before we can state our main results we need to introduce some notation. For any $k$, let $s_k$ denote the dimension of the space of cusp forms for $\SL(2,\Z)$ of weight $k$. Similarly for any $j\geq 0$, $k\geq 3$ we denote by $s_{j,k}$ the dimension of the space of vector-valued Siegel cusp forms for $\Sp(4,\Z)$, transforming according to the representation $\Sym^{ j} \otimes \det^{ k}$. 

To each normalized cusp eigenform $f$ for $\SL(2,\Z)$ of weight $k$ is attached a $2$-dimensional $\ell$-adic Galois representation $\rho_f$ of weight $k-1$ \cite{deligne69}. We define 
$\S_k = \bigoplus_{f} \rho_f$
to be the direct sum of these Galois representation for fixed $k$. By the main theorem of \cite{weissauer4d} there are also $4$-dimensional Galois representations attached to vector-valued Siegel cusp eigenforms for $\Sp(4,\Z)$ of type $\Sym^j \otimes \det^k$ with $k \geq 3$, and we define $\S_{j,k}$ analogously. So $\dim \S_k = 2s_k$ and $\dim \S_{j,k} = 4s_{j,k}$. 

Moreover, we introduce $s_k'$: this is the cardinality of the set of normalized cusp eigenforms $f$ of weight $k$ for $\SL(2,\Z)$, for which the central value $L(f,\frac 1 2)$ vanishes. In this paper all $L$-functions will be normalized to have a functional equation relating $s$ and $1-s$. The functional equation shows that the order of $L(f,s)$ at $s=\frac 1 2$ is always odd if $k \equiv 2 \pmod 4$ and even if $k \equiv 0 \pmod 4$. Hence in the former case $s_k = s_k'$; in the latter case $0 \leq s_k' \leq s_k$. In our results, the quantity $s_k'$ will only occur in the case $k \equiv 0 \pmod 4$, and in this case it is conjectured that $s_k' = 0$. Indeed, \cite{conreyfarmer} proved that this vanishing is implied by Maeda's conjecture; Maeda's conjecture has been verified numerically for weights up to $14000$ \cite{ghitza-mcandrew}.

Finally we define $\overline\S_{j,k} = \gr^W_{j+2k-3}\S_{j,k}$; in other words, we consider only the part of $\S_{j,k}$ which satisfies the Ramanujan conjecture. Counterexamples to the Ramanujan conjecture arise from the Saito--Kurokawa lifting: for a cusp eigenform $f$ of weight $2k$ for $\SL(2,\Z)$, where $k$ is odd, there is attached a scalar valued Siegel cusp form of weight $k+1$ for $\Sp(4,\Z)$ whose attached $\ell$-adic Galois representation has the form 
$$ \Q_\ell (-k+1) \oplus \rho_f \oplus \Q_\ell(-k) $$
where $\rho_f$ is the Galois representation of weight $2k-1$ attached to $f$. By \cite[Theorem 3.3]{weissauer}, these are in fact the only Siegel cusp forms violating the Ramanujan conjecture. Thus $\overline{\S}_{j,k} = \S_{j,k}$ unless $j=0$ and $k$ is even, in which case $\overline{\S}_{j,k}$ is obtained from $\S_{j,k}$ by removing the two summands of Tate type from each Saito--Kurokawa lift. 

Note that the definitions of $s_k$, $\S_k$, $s_{j,k}$ and $\S_{j,k}$ used in \cite{fvdg1} are different from ours: theirs is not only a sum over cusp forms, but includes in the case $k=2$ (resp. $j=0$, $k=3$) the contribution from the trivial automorphic representation. This allows for a compact expression for the virtual Galois representation $\sum_i (-1)^i [H^i_c(\A_2,\V_{a,b})]$ but will not be used here.

\begin{thm}\label{mainthm}Suppose $(a,b) \neq (0,0)$, and that $a+b$ is even. Then: \begin{enumerate}[\scshape(1)]
\item $H^k_c(\A_2,\V_{a,b})$ vanishes for $k \notin \{2,3,4\}$.
\item In degree $4$ we have $$ H^4_c (\A_2,\V_{a,b}) = \begin{cases} s_{a+b+4} \Q_\ell(-b-2) & a=b \text{ even,} 
\\ 0  & \text{otherwise.}   \end{cases} $$
\item In degree $3$ we have, up to semi-simplification, 
\begin{align*} H^3_c(\A_2,\V_{a,b}) &= \overline\S_{a-b,b+3} \\
& + s_{a+b+4}\S_{a-b+2}(-{b-1}) \\
&+  \S_{a+3}  \\
&+  \begin{cases} 
s_{a+b+4}' \Q_\ell(-{b-1}) & a=b  \text{ even,}  \\
s_{a+b+4} \Q_\ell(-{b-1}) & \text{otherwise,} \end{cases}\\
&+ \begin{cases} \Q_\ell & a=b \text{ odd,} \\ 0 & \text{otherwise,}\end{cases} \\ 
&+ \begin{cases} \Q_\ell(-1) & b=0, \\ 0 & \text{otherwise.}  \end{cases} .
\end{align*}
\item In degree $2$ we have, again up to semi-simplification, that
\begin{align*} H^2_c(\A_2,\V_{a,b}) &= 
 \S_{b+2} \\
&+  s_{a-b+2} \Q_\ell\\
&+  \begin{cases} 
s_{a+b+4}'  \Q_\ell(-{b-1}) & a=b \text{ even,}  \\
0 & \text{otherwise,} \end{cases} \\
&+ \begin{cases} \Q_\ell & a > b > 0 \text{ and } a, b \text{ even,} \\ 0 & \text{otherwise.}  \end{cases} 
\end{align*}

\end{enumerate}
\end{thm}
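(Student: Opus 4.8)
The plan is to compute $H^\bullet_c(\A_2,\V_{a,b})$ through its decomposition, valid up to semisimplification, into inner and Eisenstein parts,
$$ H^\bullet_c(\A_2,\V_{a,b}) = H^\bullet_!(\A_2,\V_{a,b}) \oplus H^\bullet_{\Eis,c}(\A_2,\V_{a,b}), $$
where $H^\bullet_!$ is the image of the natural map $H^\bullet_c \to H^\bullet$. The Eisenstein summand is already available from Harder's computation of the Eisenstein cohomology of an arbitrary local system on $\A_2$; it accounts for the classes of elliptic-modular type (the $\S_k$'s, e.g. $\S_{a+3}$ and $\S_{b+2}$) together with the Tate classes that live at the boundary. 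Thus the substantive task is to determine the inner cohomology and to pin down its Galois structure, and then to reassemble.

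First I would compute the intersection cohomology $IH^\bullet$ of the Baily--Borel compactification. By the Zucker conjecture (Looijenga, Saper--Stern) this is the $L^2$-cohomology, hence a sum over the discrete automorphic spectrum of $\PGSp(4)$ of the $(\g,K)$-cohomology of each representation, weighted by its level-one (spherical) multiplicity. Here I would invoke Flicker's explicit description of that discrete spectrum, organized by Arthur type: the general (stable) type, the Saito--Kurokawa type, the Yoshida type (which I expect to be absent at level one), and the one-dimensional representations (excluded since $(a,b)\neq(0,0)$). For each contributing member the relevant $(\g,K)$-cohomology is computed by Vogan--Zuckerman; matching the infinitesimal character of the archimedean component to the highest weight $(a,b)$ forces the parameters $j=a-b$, $k=b+3$, isolates the degrees in which each type contributes, and (together with the standard vanishing ranges for $(\g,K)$-cohomology) will produce the concentration in degrees $2,3,4$ asserted in part~(1).

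The cuspidal cohomology is the subsum over cuspidal representations. The tempered, general-type packets yield exactly the four-dimensional pieces $\overline\S_{a-b,b+3}$ in the middle degree, with Frobenius eigenvalues read off from the Satake parameters via Weissauer's Galois representations. The non-tempered Saito--Kurokawa representations, though cuspidal, are the delicate part: their Galois representations are reducible of shape $\Q_\ell(-k+1)\oplus \rho_f \oplus \Q_\ell(-k)$, so they feed both elliptic-modular and Tate-type classes into the cuspidal cohomology, and the $\overline\S$ notation is precisely what strips the Tate parts back off. Which members of the archimedean Saito--Kurokawa packet enter the discrete spectrum, and hence in which degree they contribute, is governed by a local--global sign condition tied to the central value $L(f,\tfrac12)$. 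This is exactly where the quantity $s'_{a+b+4}$ appears and where the dichotomy between the cases ``$a=b$ even'' and ``otherwise'' originates. Getting these multiplicities, together with the two Tate summands of each Saito--Kurokawa packet and their precise twists, to match the stated formula is the main obstacle.

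Finally I would establish that inner cohomology equals cuspidal cohomology. Since $H^\bullet_{\cusp}\subseteq H^\bullet_! \subseteq IH^\bullet$, it suffices to show that the residual (non-cuspidal discrete) contributions to $IH^\bullet$ do not survive into $H^\bullet_!$; comparing the difference between $IH^\bullet$ and $H^\bullet_{\cusp}$ against Harder's Eisenstein cohomology should identify these residual classes with boundary classes, so that they lie in the Eisenstein part rather than the inner part. With $H^\bullet_! = H^\bullet_{\cusp}$ in hand, I would assemble the inner and Eisenstein contributions, use Poincaré duality on the smooth threefold $\A_2$ together with the self-duality $\V_{a,b}^\vee \cong \V_{a,b}(a+b)$ coming from the symplectic pairing to transfer between $H^\bullet$ and $H^\bullet_c$, and then read off the four cases of the theorem degree by degree.
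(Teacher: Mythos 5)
Your overall strategy is the same as the paper's: decompose $H^\bullet_c$ up to semisimplification into inner plus compactly supported Eisenstein cohomology, quote Harder for the latter, compute the intersection cohomology via the Zucker conjecture from Flicker's classification of the level-one discrete spectrum of $\PGSp(4)$ together with the Vogan--Zuckerman list of cohomological archimedean components, and finally identify inner with cuspidal cohomology by playing the residual contributions off against the pure part of Harder's Eisenstein cohomology. (The paper makes this last step precise by a dimension count: the residual classes and the weight-$(k+a+b)$ part of $H^k_\Eis$ both occur only for $k=2$, $a=b$ even, and both have dimension $s_{a+b+4}-s'_{a+b+4}$; the case $k=4$ then follows by Poincar\'e duality.)

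There is, however, one genuine error: you dismiss the Yoshida type as ``absent at level one.'' What is absent at level one is only the \emph{holomorphic} member of each Yoshida packet, i.e.\ the Yoshida lifting to Siegel cusp forms: in the everywhere-unramified situation the multiplicity formula gives $m(\pi_\f\otimes\pi^H)=0$ but $m(\pi_\f\otimes\pi^W)=1$. So for every pair of elliptic cusp eigenforms of weights $a+b+4$ and $a-b+2$ the generic member of the corresponding Yoshida packet does occur in the discrete spectrum and contributes a two-dimensional piece to $\IH^3$; summed up, this is precisely the term $s_{a+b+4}\S_{a-b+2}(-b-1)$ in part (3) of the theorem. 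This term lies in the inner (indeed cuspidal) cohomology and is not produced by Harder's Eisenstein computation, so discarding the Yoshida type outright leaves it unaccounted for. Two smaller imprecisions: the stable general-type packets give only $\S^{\gen}_{a-b,b+3}$, and to obtain all of $\overline\S_{a-b,b+3}$ you also need the degree-$3$ contribution of the $\pi^H$ members of the Saito--Kurokawa packets when $a=b$ is odd; and for $a=b$ even you must rule out the $\pi^W$ members of those packets (the paper does this by a purity argument on the Frobenius eigenvalues), which your sketch does not address.
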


To exemplify the notation: $s_{a+b+4}\S_{a-b+2}(-{b-1})$ means a direct sum of $s_{a+b+4}$ copies of the Galois representation $\S_{a-b+2}$, Tate twisted $b+1$ times. 

As remarked earlier, it is conjectured that both occurrences of $s_k'$ in the above theorem can be replaced by $0$.

\begin{rem}\label{hodgetheory} It will be clear from the proof that the result is valid (and even a bit easier) also in the category of mixed Hodge structures. Harder's computation of the Eisenstein cohomology is valid in this category, and our computation of the inner cohomology identifies it with the cuspidal cohomology, which obtains a natural Hodge structure from the bigrading on $(\g,K)$-cohomology. This bigrading is compatible with the one obtained using the `filtration b\^ete' and the BGG-complex of \cite[Theorem VI.5.5.]{faltingschai}.\end{rem}

\begin{rem} \label{motivicremark} It is conjectured that the Galois representations $H^k_c(\A_2,\V_{a,b})$ are not semisimple in general. Suppose that $a=b = 2k-1$. Then our expression for the semisimplification of $H^3_c(\A_2,\V_{a,b})$ contains the terms $s_{4k+2} \Q_\ell(-2k)$ and $\S_{4k+2}$, the latter being the `Saito--Kurokawa' summand of $\overline \S_{0,2k+2}$. According to a conjecture in \cite[81--82]{harderbook}, these should form a nontrivial extension:
$$ 0 \to s_{4k+2}\Q_\ell(-2k) \to M \to \S_{4k+2} \to 0.$$
Note that if $f$ is a Hecke eigenform of weight $4k+2$ and $\rho_f$ is the attached Galois representation (or `motive'), then conjectures of Deligne--Bloch--Beilinson \cite[Section 1]{grosscentralvalue} predict that 
$$\dim \mathrm{Ext}^1(\Q_\ell(-2k),\rho_f) = \mathrm{ord}_{s=\tfrac 1 2} L(f,s), $$
and the functional equation for $L(f,s)$ forces it to vanish at $s=\tfrac 1 2$. Here the Ext-group is computed either in the category of $\ell$-adic Galois representations, or (even better) in the category of mixed motives. I do not know whether there exists a cusp form for the full modular group whose $L$-function vanishes to more than first order at the central point.    \end{rem}

\subsection{Application to dimension formulas for Siegel modular forms} A consequence of Remark \ref{hodgetheory} is that our main theorem can be applied to produce dimension formulas for vector-valued Siegel modular forms.  Let $i \colon \A_2 \hookrightarrow \bA_2$ be a toroidal compactification. Let $\VV_{j,k}$  for $j,k \in \Z$, $j \geq 0$  be the vector bundle on $\bA_2$ whose global sections are vector-valued Siegel modular forms of type $\Sym^{ j} \otimes \det^{ k}$. Let similarly $\VV_{j,k}(-D_\infty)$ be the vector bundle of Siegel cusp forms. The \emph{BGG-complex} (resp.\ the \emph{dual BGG-complex}) is a resolution of $i_\ast \V_{a,b} \otimes \C$ (resp.\ $i_! \V_{a,b} \otimes \C$) in terms of the vector bundles $\VV_{j,k}$ (resp.\ $\VV_{j,k}(-D_\infty)$). Then \cite[Theorem VI.5.5]{faltingschai} asserts that the hypercohomology spectral sequence of the BGG-complex degenerates, and that the Hodge filtration on the cohomology of $\V_{a,b}$ can be defined in terms of a filtration of the BGG-complex. There is also an analogous statement for the dual BGG-complex and the compactly supported cohomology. Specialized to our case, their theorem (in the case of the dual BGG-complex) asserts the following (see \cite[Theorem 17]{getzlergenustwo}): 
\begin{thm}[Faltings--Chai] The cohomology groups $H^\bullet_c(\A_2,\V_{a,b} \otimes \C)$ have a Hodge filtration with Hodge numbers in the set $\{a+b+3,a+2,b+1,0\}$. The associated graded pieces satisfy
\begin{align*}
\gr_F^0 H^\bullet_c(\A_2,\V_{a,b} \otimes \C) & \cong H^\bullet(\bA_2,\VV_{a-b,-a}(-D_\infty)), \\
\gr_F^{b+1} H^\bullet_c(\A_2,\V_{a,b} \otimes \C) & \cong H^{\bullet-1}(\bA_2,\VV_{a+b+2,-a}(-D_\infty)), \\
\gr_F^{a+2} H^\bullet_c(\A_2,\V_{a,b} \otimes \C) & \cong H^{\bullet-2}(\bA_2,\VV_{a+b+2,1-b}(-D_\infty)), \\
\gr_F^{a+b+3} H^\bullet_c(\A_2,\V_{a,b} \otimes \C) & \cong H^{\bullet-3}(\bA_2,\VV_{a-b,b+3}(-D_\infty)). 
\end{align*} \end{thm}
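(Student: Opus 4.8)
The plan is to deduce this statement from Faltings' general theory of the (dual) Bernstein--Gelfand--Gelfand complex, specialized to $\GSp(4)$ and its Siegel parabolic; the geometric input is the cited degeneration theorem, and the remaining work is a representation-theoretic computation in type $C_2$. I would first set up the de Rham picture: over $\C$ the local system $\V_{a,b}$ underlies a polarized variation of Hodge structure, whose de Rham realization extends to a vector bundle $\mathcal V_{a,b}$ on $\bA_2$ carrying a connection $\nabla$ with logarithmic poles along $D_\infty$ (Deligne's canonical extension) and whose Hodge filtration extends to a filtration by subbundles (Schmid). The logarithmic de Rham complex of the canonical extension computes $H^\bullet(\A_2,\V_{a,b}\otimes\C)$, and tensoring with $\mathcal O(-D_\infty)$, i.e.\ passing to the subcanonical extension representing $i_!$, computes $H^\bullet_c$. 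The content of \cite{faltingschai} is that inside this logarithmic de Rham complex sits the dual BGG complex, a complex of automorphic vector bundles whose differentials are $\mathcal O$-linear and which is quasi-isomorphic to it. The $\mathcal O$-linearity forces the hypercohomology spectral sequence of the filtration b\^ete to degenerate at $E_1$, and so each $\gr_F^p H^\bullet_c$ is identified with the coherent cohomology of a single automorphic bundle placed in a prescribed cohomological degree. This reduces the theorem to naming these bundles, their degrees, and their Hodge weights.

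The heart of the argument is this naming, which I would carry out by hand. For $\GSp(4)$ with Siegel parabolic $P$ (Levi $\GL(2)$, and $\dim\A_2=3$) the indexing set is $W^P$, the minimal-length representatives for $W_P\backslash W$ in the Weyl group of type $C_2$; writing $s_1,s_2$ for the reflections in the short root $\epsilon_1-\epsilon_2$ and the long root $2\epsilon_2$, one finds $W^P=\{e,\,s_2,\,s_2s_1,\,s_2s_1s_2\}$ of lengths $0,1,2,3$. With $\lambda=(a,b)$ and $\rho=(2,1)$ I would compute the affine action $w\cdot\lambda=w(\lambda+\rho)-\rho$, obtaining the Levi weights $(a,b)$, $(a,-b-2)$, $(b-1,-a-3)$, $(-b-3,-a-3)$. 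Translating a Levi weight $(m_1,m_2)$ into the modular-forms indexing via $\Sym^{m_1-m_2}\otimes\det^{-m_1}$ turns these into $\VV_{a-b,-a}$, $\VV_{a+b+2,-a}$, $\VV_{a+b+2,1-b}$, $\VV_{a-b,b+3}$, in cohomological degree shifts $\ell(w)=0,1,2,3$; after the twist by $\mathcal O(-D_\infty)$ these are exactly the four bundles in the statement. The attached Hodge weight is $p(w)=\tfrac12\langle(1-w)(\lambda+\rho),\,\epsilon_1+\epsilon_2\rangle$, with $\epsilon_1+\epsilon_2$ proportional to the Shimura cocharacter; evaluating it on the four weights gives $0,\,b+1,\,a+2,\,a+b+3$, which simultaneously supplies the set of Hodge numbers and matches each $\gr_F^p$ to the correct bundle and degree.

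The main obstacle is not geometry but the consistent bookkeeping of normalizations. One must fix conventions so that the abstractly defined automorphic bundle attached to $w\cdot\lambda$ is the modular-forms bundle $\VV_{j,k}$ and not its dual or a twist --- this is the source of the sign in $\det^{-m_1}$, reflecting whether one induces from the Hodge bundle $\mathbb E=f_\ast\Omega^1_{\X/\A_2}$ or from $\mathbb E^\vee$ --- and so that the $\mathcal O(-D_\infty)$ distinguishing cusp forms from modular forms, together with the half-integral normalization of the cocharacter hidden in the similitude factor of $\GSp(4)$, is carried consistently through the dual rather than the ordinary BGG complex with the correct shifts $\ell(w)$. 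Once these conventions are pinned down the degeneration is exactly the cited theorem, and the four-line computation above produces the asserted bundles, degrees, and Hodge numbers.
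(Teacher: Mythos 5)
Your proposal is correct and follows the same route as the paper, which offers no independent proof but simply cites \cite[Theorem VI.5.5]{faltingschai} (via \cite[Theorem 17]{getzlergenustwo}) as the specialization of the dual BGG degeneration theorem to $\GSp(4)$. Your explicit type $C_2$ bookkeeping checks out: $W^P=\{e,s_2,s_2s_1,s_2s_1s_2\}$, the dot action on $\lambda+\rho=(a+2,b+1)$ gives the four Levi weights you list, the dictionary $(m_1,m_2)\mapsto\VV_{m_1-m_2,-m_1}$ reproduces the four bundles of the statement in degrees $0,1,2,3$, and the weights $p(w)$ evaluate to $0,\,b+1,\,a+2,\,a+b+3$ as claimed.
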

We record three immediate consequences of this theorem combined with our main theorem. The first of these is a proof of a conjecture of Ibukiyama, whereas the second two are new proofs of results which are already known (by admittedly much more direct arguments).
\begin{enumerate}
\item {The bundles $\VV_{j,k}(-D_\infty)$ have no higher cohomology for any $j \geq 0$, $k \geq 3$, with the sole exception of $H^3(\bA_2,\VV_{0,3}(-D_\infty)) \cong \C$.} (To prove this, consider $\gr_F^{a+b+3}$.) An explicit formula for the Euler characteristic of the vector bundles $\VV_{j,k}(-D_\infty)$ was calculated in \cite{tsushima} using Hirzebruch--Riemann--Roch; thus, we obtain a dimension formula for vector-valued Siegel cusp forms for all $j \geq 0$, $k \geq 3$. Tsushima himself proved that these bundles have no higher cohomology when $k \geq 5$ using the Kawamata--Viehweg vanishing theorem, and conjectured that it can be improved to $k \geq 4$. The fact that this vanishing result can be extended to $k \geq 3$ is particular to the case of the full modular group and was conjectured in \cite[Conjecture 2.1]{ibukiyama1}. The resulting dimension formula for $k=3$ can be stated as
$$ \sum_{j \geq 0} s_{j,3}x^j = \frac{x^{36}}{(1-x^6)(1-x^8)(1-x^{10})(1-x^{12})}. $$
This result has also been proven in \cite[Section 5]{taibi}. 
\item There are no vector-valued Siegel modular forms of weight $1$ for the full modular group. (Put $b=0$ and consider $\gr_F^{a+2}$ to prove the case $\Sym^j \otimes \det$ with $j \geq 2$; the cases $j < 2$ require a separate  [easy] argument.) This result was previously known by \cite[Theorem 6.1]{ibukiyamaproceedings}.
\item The Siegel $\Phi$-operator is surjective for any $j \geq 0, k \geq 3$. Recall that the $\Phi$-operator maps
Siegel modular forms of type $\Sym^j \otimes \det^k$ to elliptic modular forms of weight $j+k$, and that the image of $\Phi$ consists only of cusp forms if $j > 0$. Now, the dimension of the part of $ \gr_F^{a+b+3} H^3(\A_2,\V_{a,b} \otimes \C)$ given by Eisenstein cohomology is exactly the dimension of the image of the $\Phi$-operator for $\Sym^{a-b} \otimes \det^{b+3}$, since the part given by inner cohomology coincides with the dimension of the space of cusp forms. But the dimension of this part of Eisenstein cohomology is $s_{a+3}$ unless $a=b$ is odd, in which case it is $s_{a+3}+1$. The result follows from this. Surjectivity of the $\Phi$-operator is known more generally for arbitrary level when $k \geq 5$ and $j > 0$ by \cite{arakawa}. The scalar valued case is a classical theorem of Satake. The case $k=4$ (and $k=2$) is \cite[Theorem 5.1]{ibukiyamawitt}.

\end{enumerate}
Only Siegel modular forms of weight two are inaccessible via the cohomology of local systems. In a sequel to this paper we will use similar arguments to derive dimensional results for Siegel modular forms with nontrivial level.

\section{R\'esum\'e of automorphic representations}

In this section I briefly recall some (mostly standard) facts from the theory of automorphic representations that are needed for this paper. Rather than providing detailed references everywhere, I will give general references at the beginning of each subsection.

\subsection{Automorphic representations}\label{auto} \cite{boreljacquet,cogdellkimmurty} Let $G$ be a reductive connected group over $\Q$. Let $\AA = \AA_\f \times \R$ be the ring of (rational) ad\`eles.  Let $Z$ be the center of $G$, and $\omega$ a unitary character of $Z(\AA)/Z(\Q)$. We define $L^2(G(\Q)\backslash G(\AA),\omega)$ to be the space of measurable functions $f$ on $G(\Q)\backslash G(\AA)$ which are square integrable with respect to a translation invariant measure, and which satisfy 
$f(zg) = \omega(z)f(g)$ for any $z \in Z(\AA)$. The group $G(\AA)$ acts on this space by right translation. A representation of $G(\AA)$ is called \emph{automorphic} if it is a  subquotient of  $L^2(G(\Q)\backslash G(\AA),\omega)$, for some $\omega$. We call $\omega$ the \emph{central character} of the automorphic representation.

The space $L^2(G(\Q)\backslash G(\AA),\omega)$ contains a maximal subspace which is a direct sum of irreducible representations. This subspace is called the \emph{discrete spectrum}, and an automorphic representation occuring here is called \emph{discrete}. The orthogonal complement of this subspace is the \emph{continuous spectrum}. Langlands identified the continuous spectrum with `Eisenstein series'; it is the direct integral of families of representations induced from parabolic subgroups of $G(\AA)$. The discrete spectrum, in turn, also decomposes as the direct sum of the \emph{cuspidal} and the \emph{residual} spectrum. The cuspidal spectrum is defined as the subspace spanned by functions $f$ such that the integral over $N(\Q)\backslash N(\AA)$ of $f$, and all its translates under $G(\AA)$,  vanishes, for $N$ the unipotent radical of any proper parabolic subgroup. Langlands proved that the residual spectrum is spanned by the residues of Eisenstein series, and that all residual representations  are  quotients of representations induced from a parabolic subgroup.

Any irreducible automorphic representation $\pi$ of $G(\AA)$ is a completed (restricted) tensor product of local representations $\pi_p$ of $G(\Q_p)$, where $p$ ranges over the prime numbers, and an archimedean component $\pi_\infty$. Let $K_p \subset G(\Q_p)$ be a special maximal compact subgroup. We say that $\pi$ is \emph{spherical} at $p$ if $\pi_p$ contains a nonzero vector fixed by $K_p$, in which case this vector will be unique up to a nonzero scalar. The representation $\pi$ is spherical at all but finitely many primes. The word `restricted' in the first sentence of this paragraph means that the component of the representation at $p$ should be equal to the spherical vector for all but finitely many $p$. 

The archimedean component $\pi_\infty$ can be identified with an irreducible $(\g,K_\infty)$-module, where $\g$ is the Lie group of $G(\R)$ and $K_\infty \subset G(\R)$ is a maximal compact subgroup. The center of the universal enveloping algebra of $\g$ acts by a scalar on $\pi_\infty$. The resulting map $Z(\mathcal U \g) \to \C$ is called the \emph{infinitesimal character} of $\pi$.

\subsection{Local factors}\cite{borelLfunctions}
Suppose $\pi$ is spherical at $p$. 
We define the \emph{spherical Hecke algebra} $\H_{G,K_p}$ to be the convolution algebra of $K_p$-bi-invariant $\Q$-valued functions on $G(\Q_p)$. This algebra acts on the one-dimensional space of spherical vectors, and $\pi_p$ is uniquely determined by this action. Hence specifying a spherical representation is equivalent to specifying a homomorphism $\H_{G,K_p} \to \C$. We should therefore understand the ring $\H_{G,K_p}$, and this we can do via the \emph{Satake isomorphism}. For this we need the notion of the \emph{dual group}. If $G$ is defined by a root datum, then the dual group $\widehat{G}$ is obtained by switching roots and co-roots, and characters and $1$-parameter subgroups. The Satake isomorphism states that the Hecke algebra $\H_{G,K_p}$ and the ring of virtual representations $K_0(\RR(\widehat{G}))$ become isomorphic after an extension of scalars: one has
$$ \H_{G,K_p} \otimes \C \cong K_0(\RR(\widehat{G})) \otimes \C. $$
In particular a homomorphism $\H_{G,K_p} \to \C$ is identified with a homomorphism $K_0(\RR(\widehat{G})) \to \C$. But the latter is determined by a semisimple conjugacy class $c_p$ in $\widehat{G}(\C)$. (You evaluate such a class on a representation $V$ via $\Tr(c_p \mid V)$.)

Now suppose instead we have an $\ell$-adic (or $\lambda$-adic) representation $\rho \colon \Gal(\overline \Q/\Q) \to \widehat{G}(\overline \Q_\ell)$. For all but finitely many primes, $\rho$ is going to be unramified, which means in particular that the expression $\rho(\mathrm{Frob}_p)$ is well defined up to conjugacy. If we choose an isomorphism $\C \cong \overline \Q_\ell$, then it makes sense to ask whether $c_p$ and $\rho(\mathrm{Frob}_p)$ are conjugate for almost all $p$. If this holds, then we say that $\rho$ is \emph{attached} to the automorphic representation $\pi$. We remark that this definition would make sense also if we replace $\Gal(\overline \Q/\Q)$ by  the absolute Weil group, or the conjectural global Langlands group, as in both cases $\rho(\mathrm{Frob}_p)$ should be well defined up to conjugacy at unramified primes.

By the Chebotarev density theorem, there is at most one Galois representation  attached to a given automorphic representation. The Strong Multiplicity One theorem shows the converse when $G = \GL(n)$, but in general there will be several automorphic representations with the same attached Galois representation. Conjecturally, two automorphic representations will have the same attached Galois representation if and only if they lie in the same `$L$-packet'. However, the notion of a packet has not been rigorously defined in general.

One of many conjectures within the Langlands program says roughly that there should in fact be a bijection between packets of automorphic representations for $G$ and $\ell$-adic Galois representations into the dual group. As stated this conjecture is however false, and making the conjecture precise is a rather delicate matter. For a formulation in terms of the hypothetical Langlands group, see \cite{arthurlgroup}, and for a more restrictive formulation only in terms of Galois representations, see \cite{buzzardgee}.

Often one fixes once and for all $r \colon \widehat{G} \hookrightarrow \GL(n)$. Then the conjugacy class $c_p$ can be described by specifying an $n \times n$ diagonal matrix $\mathrm{diag}(t_1,\ldots,t_n)$. The numbers $t_i$ are called the \emph{Langlands parameters} of $\pi$ at $p$. Moreover, one can then attach an $L$-function to any automorphic representation. At a prime $p$ where $\pi$ is spherical, the local $L$-factor is given by 
$$ \det(\mathbf 1_n - p^{-s} r(c_p))^{-1}.$$
On the other hand, given $r$ we also obtain from $\rho$ an $n$-dimensional $\ell$-adic Galois representation, which also has an attached $L$-function. Thus the Langlands parameters can be identified with the Frobenius eigenvalues of the attached Galois representations. Usually the notion of $\rho$ being attached to $\pi$ is defined in terms of an equality of $L$-functions, but $L$-functions will play only a minor role in this paper.

\subsection{Shimura varieties} \cite[Kapitel II]{deligne-shimuravarieties,harderbook}\label{shimura} For $G$ as above, suppose that $h \colon \mathrm{Res}_{\C/\R} \mathbb G_m \to G_{/\R}$ is a homomorphism satisfying the axioms 2.1.1.1--2.1.1.3 of \cite{deligne-shimuravarieties}. Let $K_\infty$ be the stabilizer of $h$ in $G(\R)$. Let $K_\f$ be any compact open subgroup of $G(\AA_\f)$. For $K = K_\f \times K_\infty$ we can consider the quotient 
$$ S_{K} = G(\Q) \backslash G(\AA) / K = G(\Q) \backslash X \times G(\AA_\f) / K_\f ,$$
the \emph{Shimura variety} associated to $K$. Here $X = G(\R)/K_\infty$. For $K_\f$ small enough $S_K$ is, in fact, a smooth algebraic variety which is naturally defined over a number field (the reflex field), but in the case we will consider in this paper we will actually need to think of $S_K$ as an orbifold or Deligne--Mumford stack. 

\begin{ex} \label{Ag} Siegel modular varieties are Shimura varieties. Let $G= \GSp(2g)$ and put $$h(x+iy) = \small\begin{bmatrix}
x I_g & y I_g \\ -yI_g & xI_g 
\end{bmatrix}.$$ Then $X = \mathfrak H_g \sqcup \overline{\mathfrak H}_g$ is the union of Siegel's upper half space and its complex conjugate. If we choose $K_\f = G(\widehat \Z)$, then $G(\AA_\f) = G(\Q) \cdot K_\f$ and 
$$ S_K = G(\Q) \backslash X \times G(\AA_\f) / K_\f \cong (G(\Q) \cap K_\f) \backslash X = G(\Z) \backslash X. $$
Now $G(\Z) \backslash X$ is naturally isomorphic to the stack $\A_g$ parametrizing principally polarized abelian varieties of dimension $g$. Had we chosen $K_\f$ smaller, $S_K$  would instead be a disjoint union of finite covers of $\A_g$, parametrizing abelian varieties with `$K_\f$-level structure'.\end{ex}

Let $\V$ be an irreducible finite dimensional rational representation of $G$. To $\V$ we can attach a local system on $S_K$, which we also denote $\V$. As the reader may already have noticed, we (sloppily) use `local system' as a catch-all term to describe several different structures: we obtain a locally constant sheaf of $\Q$-vector spaces on the topological space $S_K(\C)$ which in a natural way underlies a variation of Hodge structure; moreover, $\V \otimes \Q_\ell$ can (for any $\ell$) be identified with the base change of a smooth $\ell$-adic sheaf on ${S_K}$ over the reflex field. The \'etale cohomology groups of said $\ell$-adic sheaves are (after base changing to $\C$) related to the ordinary singular cohomology groups by a comparison isomorphism, and we may think informally of $\V$ as a `motivic sheaf' and $H^\bullet(S_K,\V)$ as a `mixed motive' with a compatible system of $\ell$-adic and Hodge-theoretic realizations.

\subsection{Decomposing cohomology} \cite{arthurL2} In this subsection we will find the need to compare several different cohomology theories. We will use the phrase `ordinary cohomology' to refer to the usual cohomology of the topological space $S_K(\mathbf C)$.

The spectral decomposition of $L^2(G(\Q)\backslash G(\AA))$ contains much information about the cohomology of Shimura varieties for $G$.  The connection to automorphic representations is most transparent if we work transcendentally and consider the sheaf $\V \otimes \C$ on $S_K(\C)$. Then one can consider instead of the usual de Rham complex the complex of forms $\omega$ such that $\omega$ and $d\omega$ are square integrable; the cohomology of this complex is called the \emph{$L^2$-cohomology}.  The $L^2$-cohomology has an interpretation in terms of $(\g,K_\infty)$-cohomology:
$$ H^\bullet_{(2)}(S_K(\C),\V \otimes \C) \cong H^\bullet(\g,K_\infty; \V \otimes L^2(G(\Q)\backslash G(\AA))^{K_\f}. $$ 
According to \cite[Section 4]{borelcasselman}, the contribution from the continuous spectrum to the $(\mathfrak g, K_\infty)$-cohomology vanishes in many natural cases (including all Shimura varieties); in fact, the contribution is non-zero if and only if the $L^2$-cohomology is infinite dimensional. In particular, we may in our case replace $L^2(G(\Q)\backslash G(\AA))$ by the direct sum  $\bigoplus_\pi m(\pi) \pi$ over the discrete spectrum, giving instead the expression
$$ H^\bullet_{(2)}(S_K(\C),\V\otimes \C) \cong \bigoplus_{\pi \text{ disc.}} m(\pi) \pi_\f^{K_\f} \otimes H^\bullet(\g,K_\infty;\V \otimes \pi_\infty). $$ 
In this decomposition, each $\pi_\f^{K_\f}$ is a module over the Hecke algebra, giving the cohomology a Hecke action. Each $H^\bullet(\g,K_\infty;\V \otimes \pi_\infty)$ has a natural $(p,q)$-decomposition, defining a pure Hodge structure on each cohomology group $H^k_{(2)}(S_K(\C),\V)$. 

We say that an automorphic representation $\pi$ is \emph{cohomological} if there exists a representation $\V$ for which $H^\bullet(\g,K_\infty;\V \otimes \pi_\infty) \neq 0$.  Wigner's lemma gives a necessary condition for this nonvanishing of $(\g,K_\infty)$-cohomology, namely that $\pi_\infty$ and $\V^\vee$ (denoting the contragredient) have the same infinitesimal character. For cohomological representations, the infinitesimal character is the book-keeping device that tells you to which local system the automorphic representation will contribute $L^2$-cohomology.

The natural map from $L^2$-cohomology to ordinary cohomology is in general neither injective nor surjective. One can however also define the \emph{cuspidal cohomology} as the direct summand 
$$ H^\bullet_{{\cusp}} (S_K(\C),\V\otimes \C) \cong \bigoplus_{\pi \text{ cusp.}} m(\pi) \pi_\f^{K_\f} \otimes H^\bullet(\g,K_\infty;\V \otimes \pi_\infty) $$ 
of the $L^2$-cohomology, and it  injects naturally into the ordinary cohomology \cite[Corollary 5.5]{borelstablereal}.

Finally one can consider the \emph{inner cohomology}, which is defined as 
$$ H^\bullet_!(S_K,\V) = \mathrm{Image}(H^\bullet_c(S_K,\V) \to H^\bullet(S_K,\V)).$$ When we extend scalars to $\C$, the inner cohomology is sandwiched between the cuspidal and the $L^2$-cohomology. Indeed, the map from compactly supported cohomology to ordinary cohomology always factors through the $L^2$-cohomology, since the orthogonal projection of a closed compactly supported form to the space of harmonic forms is square integrable. This shows that the inner cohomology is a subquotient of the $L^2$-cohomology. On the other hand, the aforementioned result of Borel shows that the cuspidal cohomology injects into the inner cohomology. 

The `complement' of the inner cohomology is called the \emph{Eisenstein cohomology}. Formally, it is defined as the cokernel of $H^\bullet_c(S_K,\V) \to H^\bullet(S_K,\V)$. One could also consider the kernel, which gives the \emph{compactly supported Eisenstein cohomology}. We denote these $H^\bullet_\Eis$ and $H^\bullet_{c,\Eis}$, respectively. We will consider $G = \GSp(2g)$, in which case each local system $\V$ is isomorphic to its dual, up to a twist by the multiplier. Indeed, the restriction of the representation $\V$ to $\Sp(2g)$ satisfies $\V \cong \V^\vee$, with the symplectic pairing providing the isomorphism. 
In this case, we see that either one of $H^\bullet_\Eis$ and $H^\bullet_{c,\Eis}$ determines the other via Poincar\'e duality.

The Zucker conjecture, proven independently in \cite{zuckerconj1,zuckerconj2}, gives an isomorphism between the $L^2$-cohomology of $S_K$ and the intersection cohomology of the Baily--Borel--Satake compactification $\overline S_K$:
$$ H^\bullet_{(2)}(S_K(\C),\V\otimes \C) \cong \IH^\bullet(\overline S_K(\C),j_{!\ast} \V\otimes \C), $$
where $j : S_K \to \overline S_K$ is the inclusion, and $j_{!\ast}$ denotes the intermediate extension. This isomorphism is compatible with the Hecke algebra action.  But the intersection cohomology makes sense algebraically, and we can decompose the intersection cohomology of $\V$ into irreducible Hecke modules already over some number field $F$. We thus get a decomposition
$$
\IH^\bullet(\overline S_K,j_{!\ast}\V\otimes F) = \bigoplus_{\pi_\f} \pi_\f^{K_\f} \otimes H^\bullet(\pi_\f). $$
Here the sum runs over the finite parts of all discrete automorphic representations, and $H^\bullet(\pi_\f) \otimes_F \C$ is isomorphic to $\bigoplus_{\pi_\infty} m(\pi_\f \otimes \pi_\infty) H^\bullet(\g,K_\infty;\V \otimes \pi_\infty)$. For any non-archimedean place $\lambda$ of $F$ we also get a structure of $\lambda$-adic Galois representation on each $H^\bullet(\pi_\f) \otimes F_\lambda$ by a comparison isomorphism with the \'etale intersection cohomology. If we do not insist on a decomposition into absolutely irreducible Hecke modules we can take $F=\Q$, as in Theorem \ref{mainthm}, where we e.g.\ consider a summand corresponding to all cusp forms of given weight, instead of a decomposition into Galois representations attached to individual cusp forms. See \cite[Conjecture 5.2]{blasiusrogawskimotives} for a conjectural formula expressing $H^\bullet(\pi_\f) \otimes F_\lambda$ in terms of Galois representations attached to $\pi$.

\section{The case of $\A_2$}

Consider again the stack $\A_2$ of principally polarized abelian surfaces. As in Example \ref{Ag}, we may think of it as a Shimura variety for $\GSp(4)$. However, we would prefer to work with $G = \PGSp(4)$, and there is a minor issue here. If we put $K_\f = G(\widehat{\Z})$, then the corresponding Shimura variety is  $S_K = \PGSp(4,\Z) \backslash (\mathfrak{H}_2 \coprod \overline{\mathfrak{H}}_2)$, which fails to be isomorphic to $\A_2$ as a \emph{stack}. Indeed every point of $\A_2$ has $\pm 1$ in its isotropy group, but a general point of $S_K$ has trivial isotropy. The projection $\GSp(4) \to \PGSp(4)$ defines a map $\pi \colon \A_2 \to S_K$ which induces an isomorphism on coarse moduli spaces, but which is a $\mu_2$-gerbe in the sense of stacks. 

The finite dimensional irreducible representations of $G$ are indexed by integers $a \geq b \geq 0$ for which $a+b$ is even. The local systems on $S_K$ obtained in this way are strongly related to the local systems $\V_{a,b}$ that we defined in Section \ref{mainresults}. Specifically, if $a+b$ is even, then we may Tate twist the local system $\V_{a,b}$ on $\A_2$ to be a weight zero variation of Hodge structure/$\ell$-adic sheaf; its pushforward under $\pi$ is the one that is naturally attached to an irreducible representation of $\PGSp(4)$. Since $\mathrm R \pi_ \ast \V_{a,b} = \pi_\ast \V_{a,b}$, it will suffice to compute the cohomology of the local systems on $S_K$. From now on we tacitly identify the local systems on $\A_2$ and on $S_K$ with each other. 

In this section we will see how the results in \cite{flicker} allow the computation of the cuspidal and intersection cohomology of these local systems on $\A_2$. Let me emphasize that as mentioned in the above paragraph, by our definition the  $\V_{a,b}$ are Weil sheaves of weight $a+b$; this is the \emph{cohomological normalization}, which is the most natural from the point of view of algebraic geometry. There is also the \emph{unitary normalization}, where $\V_{a,b}$ has weight $0$, which is used in Flicker's work. If $a+b$ is even, as in our case, then the two differ only by a Tate twist.  We will from now on always make this Tate twist whenever we quote results from Flicker's book, without explicitly mentioning it.

Since $\A_2$ is the complement of a normal crossing divisor in a smooth proper stack over $\Spec(\Z)$, and the local systems $\V_{a,b}$ are also defined over $\Spec(\Z)$, the cohomology groups $H^\bullet(\A_2,\V_{a,b} \otimes \Q_\ell)$ must define Galois representations of a very special kind: they are unramified at every prime $p \neq \ell$ and crystalline at $\ell$. The same phenomenon is clear also on the automorphic side. If $\pi_\f^{K_\f} \neq 0$ and $K_\f = G(\widehat{\Z})$, then $\pi_\f$ must be spherical at all primes by definition  since $G(\Z_p)$ is a special maximal compact subgroup of $G(\Q_p)$. Conversely if $\pi_\f$ is spherical everywhere then $\pi_\f^{K_\f}$ is exactly $1$-dimensional.

Considering $\PGSp(4)$ rather than $\GSp(4)$ is the same as only considering automorphic representations of $\GSp(4)$ with trivial central character. The reason we can do this is that we are considering only the completely unramified case (i.e.\ the case of the full modular group); in general, the image of a congruence subgroup of $\GSp(4)$ in $\PGSp(4)$ will no longer be a congruence subgroup. We restrict ourselves to $\PGSp(4)$ in this paper as this is the situation considered in \cite{flicker}.

We note that Flicker's work assumes that all automorphic representations $\pi$ occuring are elliptic at at least three  places. This is explained in Section I.2g of Part 1 of the book. This assumption is present in order to replace Arthur's trace formula with the simple trace formula of \cite{flickerkazhdan}. However, he also notes that this  assumption is only present in order to simplify the exposition --- the same results can be derived assuming only that $\pi$ is elliptic at a single real place, using the same ideas used to derive the simple trace formula in \cite{flickerkazhdan}, as detailed in \cite{laumoncompositio,laumonasterisque}. In particular Flicker's classification of the cohomological part of the discrete spectrum carries through (an archimedean component which is cohomological is elliptic).

We begin by determining $\IH^\bullet(\overline \A_2,j_{!\ast}\V_{a,b})$. This amounts to determining all representations in the discrete spectrum of $\PGSp(4)$ which are spherical at every finite place and cohomological, and for each of them the corresponding Galois representation $H^\bullet(\pi_f)$. All these things are described very precisely by Flicker. Then we shall see that $H^\bullet_{\cusp}(\A_2,\V_{a,b})$ is well defined as a subspace of the \'etale intersection cohomology, and that it coincides with the inner cohomology.

\subsection{The Vogan--Zuckerman classification} Recall that an automorphic representation $\pi_\f \otimes \pi_\infty$ is cohomological if $\pi_\infty$ has nonzero $(\g,K_\infty)$-cohomology with respect to some finite dimensional representation $\V$. If $\pi_\infty$ is in the discrete series, then $\pi$ is always cohomological. The cohomological representations which are not in the discrete series can be determined by \cite{vz}. We recall from \cite[293]{taylor3fold} the result for $\GSp(4)$:

In the regular case there are no cohomological ones apart from the two discrete series representations, which we denote by $\pi^H$ and $\pi^W$ (we omit the infinitesimal character from the notation). The former is in the holomorphic discrete series and the latter has a Whittaker model. Both have $2$-dimensional $(\g,K_\infty)$-cohomology, concentrated in degree 3: their Hodge numbers are $(3,0)$ and $(0,3)$, and $(2,1)$ and $(1,2)$, respectively. 

The representations $\pi$ with $\pi_\infty = \pi^H$ are correspond bijectively to cuspidal Siegel modular eigenforms. 
If $F$ is a holomorphic modular form on Siegel's upper half space of genus $g$, then by the Strong Approximation theorem it defines a function on $G(\AA)$, where $G=\GSp(2g)$. If it is modular for the full modular group, then we obtain a function with trivial central character. The subspace spanned by all right translates of this function is the sought for automorphic representation (or a sum of several copies of it). Conversely, any automorphic representation $\pi$ with archimedean component in the holomorphic discrete series determines uniquely a holomorphic vector-valued cusp form by considering the one-dimensional space of lowest $K_\infty$-type in $\pi_\infty$, and $\pi_\f^{K_\f}$ being one-dimensional forces it to be an eigenvector for all Hecke operators. See \cite{asgarischmidt} for more details.

For singular weights there are further possibilities. If $b=0$ there is a  unitary representation $\pi^1$ whose $(\g,K_\infty)$-cohomology is 2-dimensional in degrees $2$ and $4$, with Hodge types $(2,0)$, $(0,2)$, $(3,1)$ and $(1,3)$. 

If $a=b$ there are two unitary representations $\pi^{2+}$ and $\pi^{2-}$. One is obtained from the other by tensoring with the sign character. Both have $1$-dimensional $(\g,K_\infty)$-cohomology in degrees $2$ and $4$, with Hodge types $(1,1)$ and $(2,2)$. 

Finally if $a=b=0$ we must in addition consider one-dimensional representations, which have cohomology in degrees $0$, $2$, $4$ and $6$: we will ignore this case. 

\subsection{Packets and multiplicities}In Flicker's book, the discrete spectrum of $\PGSp(4)$ is partitioned into `packets' and `quasi-packets', and he conjectures that these coincide with the conjecturally defined $L$-packets and $A$-packets. However, in the totally unramified case the situation simplifies. In general, the (conjectural) $A$-packets are products of \emph{local} $A$-packets, which specify the possible local components $\pi_v$. The local packets at non-archimedean $v$ are expected to have exactly one  spherical member. Since we are only going to consider representations which are spherical at \emph{every} finite place, we thus see that $\pi$ and $\pi'$ will be in the same $A$-packet if and only if they are in the same $L$-packet if and only if $\pi_\f \cong \pi_\f'$.  For this reason we simply write \emph{packet} everywhere in what follows.

In Flicker's classification there are five types of automorphic representations in the discrete spectrum. In the first three types, the corresponding packets are \emph{stable}: each representation in the packet occurs with multiplicity exactly $1$ in the discrete spectrum. Types $4$ and $5$, however, are \emph{unstable}. This means that the multiplicities are not constant over the packets: in general some representations in the packet occur with multiplicity $0$ and others with multiplicity $1$. Flicker \cite[Section 2.II.4]{flicker} gives explicit formulas for the multiplicities of the representations in the packet.

In general there are local packets at each prime $p$ in the unstable case, which consist of either one or two elements. We write such local packets as $\{\Pi_p^+\}$ and $\{\Pi_p^+,\Pi_p^-\}$, respectively. An element of the global packet is specified by choosing an element of the local packet at each  $p$. All but finitely many of the local packets will be singletons, so each packet is finite. When $p=\infty$ we always have $\Pi_p^- = \pi^H$. If $\pi$ lies in an unstable packet, its multiplicity in the discrete spectrum depends only on the parity of the number of places $p$ where $\pi_p = \Pi_p^-$. 

However, as we have already mentioned, the local packet contains only one element for a prime $p$ where $\pi$ is spherical. More generally, certain local representations need to be discrete series in order for $\Pi_p^-$ to be nonzero. Since we are in the level $1$ case, this means that the representations in the packet can differ only in their archimedean component, and the multiplicity formulas simplify significantly: they depend only on whether or not $\pi_\infty = \pi^H$. 

To each discrete spectrum automorphic representation $\pi$ one can attach a $4$-dimensional Galois representation whose Frobenius eigenvalues at $p$ are given by the Langlands parameters at $p$ of $\pi$. (Here we fix the 4-dimensional spin representation of $\mathrm{Spin}(5)$, the dual group of $\PGSp(4)$.) If $\pi$ is in a stable packet, then $H^\bullet(\pi_\f)$ is $4$-dimensional and coincides with this attached representation. In the unstable case, the attached Galois representation is always a sum of two $2$-dimensional pieces, and $H^\bullet(\pi_\f)$ is given by one of these two summands. Which of the two halves contributes nontrivially is decided by a formula similar to the multiplicity formula, see \cite[Part 2, V.2]{flicker}. In particular it again has the feature that it depends on the parity of the number of places where $\pi_p = \Pi_p^-$, and simplifies significantly in the completely unramified case.

\subsection{The discrete spectrum of $\PGSp(4)$}

The discrete spectrum automorphic representations which can contribute nontrivially to $H^\bullet_{(2)}(\A_2,\V_{a,b})$ have an archimedean component with infinitesimal character $(a,b)+(2,1)$. A complete classification into five types is given in \cite[Theorem 2, pp. 213--216]{flicker}. We deal with each type separately. This classification is the same as the one announced by Arthur for $\GSp(4)$ \cite{arthurclassification}, except that the ones of Howe--Piatetski-Shapiro-type do not appear.  

In what follows we write in parentheses the names assigned to these families by Arthur. 

\subsubsection{Type 1 (General type)}

These are exactly the ones that lift to cuspidal representations of $\PGL(4)$. 

Each of these lies in a packet of cardinality $2$, where the elements in the packet are distinguished by their archimedean component: one is in the holomorphic discrete series and the other has a Whittaker model. Both elements of the packet occur with multiplicity $1$ in the discrete spectrum. Packets of this type correspond bijectively to vector valued cuspidal Siegel eigenforms which are neither endoscopic (a Yoshida-type lifting) nor CAP (a Saito--Kurokawa-type lifting). The contribution from this part of the discrete spectrum to $\IH^\bullet(\overline \A_2,j_{!\ast}\V_{a,b})$ is concentrated in degree $3$ and is the sum of the Galois representations attached to the Siegel cusp forms. We shall see that the Yoshida-type liftings do not occur in level 1. We denote this contribution to the cohomology by $\S_{a-b,b+3}^\gen$.

\subsubsection{Type 2 (Soudry type)}

These packets are singletons, and the archimedean component is $\pi^1$, and will therefore not occur unless $b=0$. Every packet is obtained by a lifting from a cuspidal representation $\Pi$ of $\GL(2)$, corresponding to a cusp eigenform of weight $a+1$ whose central character $\xi$ is quadratic, $\xi \neq 1$, and $\xi \Pi = \Pi$. This is obviously impossible in level $1$ for several reasons: for one, $a$ must be even, and there are no modular forms of odd weight for $\SL(2,\Z)$.  

\subsubsection{Type 3 (One-dimensional type)}

These are the representations with $\pi_\infty$ $1$-dimensional and will only occur when $a=b=0$; for our purposes this case can clearly be ignored.

\subsubsection{Type 4 (Yoshida type)}

This is the first unstable case. All these $\pi$ have $\pi_\infty \in \{\pi^H,\pi^W\}$ and their $L$-function is the product of $L$-functions attached to cusp forms for $\GL(2)$. For each pair of cuspidal automorphic representations $\Pi_1$ and $\Pi_2$ of $\PGL(2)$ whose weights are $a+b+4$ and $a-b+2$, respectively, there is a packet $\{\pi\}$ of Yoshida type. 
As explained earlier, the fact that we are in the unramified case implies that members of the packet can only differ in their archimedean component, so we should consider only
 $\pi_{\f} \otimes \pi^H$ and $\pi_\f \otimes \pi^W$. The multiplicity formula simplifies (since we are in the unramified case) to 
$$ m(\pi_\f \otimes \pi^H) = 0, \qquad m(\pi_\f \otimes \pi^W)=1,$$
and so $\pi_\f \otimes \pi^W$ will contribute a $2$-dimensional piece of the cohomology in degree 3. The trace of Frobenius on this part of cohomology is also calculated by Flicker and we find the Galois representation $\rho_{\Pi_2} \otimes \Q_\ell(-b-1)$, where $\rho_{\Pi_2}$ is the $2$-dimensional representation attached to $\Pi_2$. Summing over all $\Pi_1 $ and $\Pi_2$ this part therefore contributes
$$ s_{a+b+4}\S_{a-b+2} (-b-1) $$
to $\IH^3(\overline \A_2,j_{!\ast}\V_{a,b})$.

We note in particular that there are no Yoshida-type liftings to Siegel cusp forms in level 1: these would correspond to a $\pi$ with $\pi_\infty = \pi^H$ and multiplicity $1$. 

The required liftings and multiplicity formulas for the endoscopic case have also been established for $\GSp(4)$ in \cite[Theorem 5.2]{weissauerbook}.

\subsubsection{Type 5 (Saito--Kurokawa type)}

This case appears only when $a=b$. Here there are four possible archimedean components: $\pi^H, \pi^W, \pi^{2+}$ and $\pi^{2-}$. Every packet contains precisely one of $\pi^{2+}$ and $\pi^{2-}$. For each cuspidal automorphic representation $\Pi$ of $\PGL(2)$ of weight $a+b+4$ and for $\xi \in \{1,\sgn\}$ we get a Saito--Kurokawa packet $\{\pi\}$. Since we are in level $1$, we can ignore the character $\xi$ (it must be trivial), which means that $\pi^{2-}$ will not appear. 

I should also say that there is a minor error at this place of Flicker's book. Flicker states that the Langlands parameters at a place $u$ are (his notation) 
$$ \mathrm{diag}(\xi_u q_u^{1/2}z_{1u}, \xi_u q_u^{1/2}z_{2u}, \xi_u q_u^{-1/2}z_{2u}, \xi_u q_u^{-1/2}z_{1u}) $$
when they should be
$$ \mathrm{diag}(z_{1u}, \xi_u q_u^{1/2}, \xi_u q_u^{-1/2}, z_{2u}). $$

Let us then consider the multiplicities, which again simplify since we are in the level 1 case: we find 
$$ m(\pi_\f \otimes \pi_\infty) = \frac 1 2 \left( 1 + \varepsilon(\Pi,{\tfrac 1 2} )\cdot (-1)^{n}\right) $$
where $n = 1$ if $\pi_\infty = \pi^H$ and $n=0$ otherwise, and $\varepsilon(\Pi, \frac{1}{2}) = (-1)^k$ if $\Pi$ is attached to a cusp form of weight $2k$.

We thus see that if $a=b$ is odd, the only representation in the packet with nonzero multiplicity is $\pi_\f \otimes \pi^H$, which should correspond to a Siegel modular form. The Siegel modular forms obtained in this way are precisely the classical Saito--Kurokawa liftings, and the contribution in this case is exactly $\S_{a+b+4}$.

For $a = b$ even we could a priori have both $\pi_\f \otimes\pi^W$ and $\pi_\f \otimes \pi^{2+}$ with nonzero multiplicity. But we can see by studying the Frobenius eigenvalues that $\pi^W$ will not appear. Indeed, the representation $\pi_\f \otimes \pi^W$ would contribute to the intersection cohomology  in degree $3$, as we see from the $(\g,K_\infty)$-cohomology of $\pi^W$. Then its Frobenius eigenvalues are pure of weight $a+b+3$. But the Frobenius eigenvalues at $p$ will be $p^{b+1}$ and $p^{b+2}$, as determined by Flicker, a contradiction. On the other hand, we know that $\pi_\f \otimes \pi^{2+}$ is automorphic:  it is the Langlands quotient of 
 $$ \mathrm{Ind}_{P(\AA)}^{\PGSp(4,\AA)} \left( \Pi \otimes 1 \right),$$
 where $P$ is the Siegel parabolic (whose Levi component is $\PGL(2) \times \GL(1)$),
and the multiplicity formula shows that it has multiplicity $1$ in the discrete spectrum. The representations of this form will contribute a term $s_{a+b+4} \Q_\ell(-{b-1}) $ to $\IH^2(\overline \A_2,j_{!\ast}\V_{a,b})$ and $s_{a+b+4}\Q_\ell(-{b-2}) $ to $\IH^4(\overline \A_2,j_{!\ast}\V_{a,b})$.

\begin{rem}That $\pi_\infty=\pi^W$ does not occur in the Saito--Kurokawa case is mentioned as a conjecture of Blasius and Rogawski in \cite[Section 6]{tilouinesiegelthreefold}. The argument above will prove this conjecture for $\PGSp(4)$. Probably a proof for $\GSp(4)$ in general can be obtained by a similar argument, or by considering the possible Hodge numbers of $H^\bullet(\pi_\f)$.  \end{rem}

\subsection{The inner cohomology and the proof of the main theorem}
From what we have seen so far, we can completely write down the $L^2$-cohomology and the intersection cohomology of any local system on $\A_2$. Summing up the contributions from all parts of the discrete spectrum, we see that 
$$\IH^3(\overline \A_2,j_{!\ast}\V_{a,b})\cong \S^{\gen}_{a-b,b+3} +s_{a+b+4} \S_{a-b+2} + 
\begin{cases} \S_{a+b+4} & a=b \text{ odd,} \\ 0 & \text{otherwise.}\end{cases} $$
The cohomology vanishes outside the middle degree in all cases except when $a=b$ is even, when we have 
$$ \IH^2(\overline \A_2,j_{!\ast}\V_{a,b}) \cong \begin{cases} s_{a+b+4} \Q_\ell(-b-1) & a=b \text{ even,} \\ 0 & \text{otherwise,}\end{cases}$$
and $\IH^4(\overline \A_2,j_{!\ast}\V_{a,b}) \cong \IH^2(\overline \A_2,j_{!\ast}\V_{a,b})(-1).$

Note that the sum
$$\S^{\gen}_{a-b,b+3} +
\begin{cases} \S_{a+b+4} & a=b \text{ odd,} \\ 0 & \text{otherwise,}\end{cases} $$
is exactly what was denoted $\overline \S_{a-b,b+3}$ in Theorem \ref{mainthm}, since there are no Yoshida-type liftings in our case.

If we wish to determine in addition the cuspidal cohomology, then we need to understand which of the above representations are in the residual spectrum. The residual spectrum of $\GSp(4)$ is completely described in \cite[Section 7]{kimresidual}. We see that there is exactly one case above where the representation is residual: namely, the Langlands quotient of $ \mathrm{Ind}_{P(\AA)}^{\PGSp(4,\AA)} \left( \Pi \otimes 1 \right)$ is residual if and only if $L(\Pi, \frac 1 2 ) $ is nonzero. We deduce that the cuspidal cohomology coincides with the $L^2$-cohomology except in degrees $2$ and $4$ when $a=b$ is even, where we have 
$$ H^2_{\cusp}(\A_2,\V_{a,b}) \cong s'_{a+b+4} \Q(-b-1) $$
(so conjecturally, it vanishes) and similarly for $H^4_\cusp$. We also observe that for all packets, either all discrete representations are cuspidal or all are residual, so that the cuspidal cohomology makes sense also as a summand of the \'etale intersection cohomology (a priori it is only a summand in the $L^2$-cohomology), and we can talk about the Galois representation on the cuspidal cohomology.

 The Eisenstein cohomology of any local system on $\A_2$ has been completely determined in any degree, considered as an $\ell$-adic Galois representation up to semi-simplification, in \cite{harder}. From loc.\ cit.\ and the above discussion we may deduce the following.
 
\begin{prop}The natural map $H^\bullet_\cusp(\A_2,\V_{a,b}) \to H^\bullet_!(\A_2,\V_{a,b})$ is an isomorphism for any $a,b$. \end{prop}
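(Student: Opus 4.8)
The plan is to squeeze $H^\bullet_!$ between the two objects we have already computed. Recall from the discussion of inner, cuspidal and $L^2$-cohomology that we always have $H^\bullet_\cusp(\A_2,\V_{a,b}) \hookrightarrow H^\bullet_!(\A_2,\V_{a,b})$ (the injectivity being \cite[Corollary 5.5]{borelstablereal}), and that $H^\bullet_!$ is a subquotient of the $L^2$-cohomology, which the Zucker conjecture identifies with $\IH^\bullet(\overline\A_2,j_{!\ast}\V_{a,b})$. All of these are Hecke modules decomposing into Galois eigensystems, so it is enough to compare multiplicities: for every eigensystem, its multiplicity in $H^\bullet_!$ is at least its multiplicity in $H^\bullet_\cusp$ and at most its multiplicity in $\IH^\bullet$. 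The strategy is therefore to exhibit the two outer terms explicitly and to show that the inner term cannot exceed the cuspidal one.

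First I would locate the only possible discrepancy. Comparing the computations above, $\IH^\bullet$ and $H^\bullet_\cusp$ agree in every degree and for every $(a,b)$ with a single exception: when $a=b$ is even, $\IH^2$ (resp. $\IH^4$) contains $s_{a+b+4}$ copies of $\Q_\ell(-b-1)$ (resp. $\Q_\ell(-b-2)$), of which only $s'_{a+b+4}$ are cuspidal. The extra $s_{a+b+4}-s'_{a+b+4}$ classes come from the residual Saito--Kurokawa representations, i.e. the Langlands quotients of $\Ind_{P(\AA)}^{\PGSp(4,\AA)}(\Pi\otimes 1)$ with $L(\Pi,\tfrac12)\neq 0$, which are residual by \cite[Section 7]{kimresidual}, and whose archimedean component $\pi^{2+}$ has $(\g,K_\infty)$-cohomology only in degrees $2$ and $4$. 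Thus the proposition reduces to showing that these residual classes are not inner, and by Poincar\'e duality (which interchanges degrees $2$ and $4$) it suffices to treat degree $2$.

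Next I would emphasize that a naive weight argument does not settle the matter: for $a=b$ the class $\Q_\ell(-b-1)$ has weight $2b+2 = 2+(a+b)$, which is exactly the weight of the pure inner cohomology $H^2_!$, so purity alone does not exclude it. This is where Harder's computation of the Eisenstein cohomology \cite{harder} is indispensable. The residual Saito--Kurokawa classes are residues of Eisenstein series attached to the Siegel parabolic $P$, whose rational boundary component is $\A_1$; as such they are produced by the Eisenstein/residue construction and appear in $H^\bullet_\Eis$. Since the inner and the Eisenstein cohomology are complementary --- the former being the image and the latter the cokernel of $H^\bullet_c \to H^\bullet$ --- a multiplicity count shows that all $s_{a+b+4}-s'_{a+b+4}$ residual eigensystems are accounted for in $H^\bullet_\Eis$, so that exactly the $s'_{a+b+4}$ cuspidal copies remain in $H^2_!$. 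Combined with the sandwich of the first paragraph, this forces $H^\bullet_! = H^\bullet_\cusp$ in all degrees.

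I expect the main obstacle to be precisely this last comparison. One must align Harder's parametrization of the Eisenstein cohomology with the residual eigensystems coming out of Flicker's classification \cite{flicker}, match the Tate twists and the cohomological degrees ($2$, resp.\ $4$), and --- most delicately --- check that it is the $L(\Pi,\tfrac12)\neq 0$ forms which feed into $H^\bullet_\Eis$, whereas the $s'_{a+b+4}$ forms with vanishing central value stay cuspidal and hence inner. Verifying that Harder's Eisenstein contribution in degrees $2$ and $4$ is exactly $(s_{a+b+4}-s'_{a+b+4})$ copies of the relevant Tate class is the crux of the matter; once this bookkeeping is in place the remaining implications are formal.
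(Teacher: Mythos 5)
Your proposal is correct and follows essentially the same route as the paper: sandwich $H^\bullet_!$ between $H^\bullet_\cusp$ and $\IH^\bullet$, observe that they can only differ by the residual Saito--Kurokawa classes in degrees $2$ and $4$ when $a=b$ is even, match the dimension $s_{a+b+4}-s'_{a+b+4}$ of the residual part against the pure (weight $k+a+b$) part of Harder's Eisenstein cohomology in degree $2$, and handle degree $4$ by Poincar\'e duality. The step you flag as the crux --- that Harder's pure Eisenstein classes in degree $2$ are exactly $s_{a+b+4}-s'_{a+b+4}$ copies of $\Q_\ell(-b-1)$, arising as residues of Eisenstein series attached to cusp forms with nonvanishing central $L$-value --- is precisely what the paper reads off from \cite{harder} and comments on in the remark following the proposition.
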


\begin{proof}
Recall that  one has 
$$ \IH^k(\overline \A_2,j_{!\ast}\V_{a,b}) = H^k_\cusp(\A_2,\V_{a,b}) \oplus H^k_{\mathrm{res}}(\A_2,\V_{a,b}) $$
and 
$$ W_{k+a+b}H^k(\A_2,\V_{a,b}) = H^k_!(\A_2,\V_{a,b}) \oplus W_{k+a+b} H^k_\Eis(\A_2,\V_{a,b}).$$
Moreover, the map $\IH^k(\overline \A_2,j_{!\ast}\V_{a,b}) \to W_{k+a+b}H^k(\A_2,\V_{a,b})$ is surjective and maps the cuspidal cohomology into the inner cohomology. Hence if $H^k_{\mathrm{res}}(\A_2,\V_{a,b})$ and $W_{k+a+b} H^k_\Eis(\A_2,\V_{a,b})$ have the same dimension, then $H^k_\cusp(\A_2,\V_{a,b}) \to H^k_!(\A_2,\V_{a,b})$ is an isomorphism. 

Now we have seen that $H^k_{\mathrm{res}}(\A_2,\V_{a,b})$ is nonzero only for $k \in \{2,4\}$ and $a=b$ even, so these are the only cases where it is not automatic that $H^\bullet_\cusp(\A_2,\V_{a,b}) \to H^\bullet_!(\A_2,\V_{a,b})$ is an isomorphism. The dimension of $H^k_{\mathrm{res}}(\A_2,\V_{a,b})$ is $s_{a+b+4}-s_{a+b+4}'$ in these cases. From Harder's paper we see that $W_{k+a+b}H^k_\Eis(\A_2,\V_{a,b}) \neq 0$ only for $k =2$ and $a=b$ even, in which case its dimension, too, is $s_{a+b+4}-s_{a+b+4}'$. But then $H^2_\cusp(\A_2,\V_{a,b}) \to H^2_!(\A_2,\V_{a,b})$ is an isomorphism by the preceding paragraph, and then it is an isomorphism also in degree $4$ since both the cuspidal and the inner cohomology satisfy Poincar\'e duality.
\end{proof}

\begin{rem} The equality of dimensions above is not surprising, since Harder explicitly constructs these pure Eisenstein cohomology classes as residues of Eisenstein series associated to cusp forms for $\SL(2,\Z)$ with nonvanishing central value. So in a sense the dimension argument in the preceding theorem is unnecessarily convoluted. See also \cite{residual} which describes in general all possible contributions from the residual spectrum to the Eisenstein cohomology of a Siegel threefold.\end{rem}

 The main theorem of the paper follows from this result, as we now explain.

 \begin{proof}[Proof of Theorem \ref{mainthm}] 
 
 Up to semi-simplification we have $$H^\bullet_c(\A_2,\V_{a,b}) = H^\bullet_! (\A_2,\V_{a,b})\oplus H^\bullet_{c,\Eis}(\A_2,\V_{a,b}),$$ and $H^\bullet_{c,\Eis}(\A_2,\V_{a,b})$ was as already remarked determined in \cite{harder}. By the preceding proposition we have $H^\bullet_!(\A_2,\V_{a,b}) = H^\bullet_\cusp(\A_2,\V_{a,b})$, and the latter has been determined already in this section. Summing up the Eisenstein cohomology and the cuspidal contribution gives the result. \end{proof}

\printbibliography

\end{document}